\documentclass{amsart}
\usepackage{amsmath, amsthm, amssymb, amscd, amsfonts,enumerate}
\usepackage[utf8]{inputenc}
\usepackage[mathscr]{eucal}
\usepackage{color}
\usepackage{float}
\usepackage{cite}
\usepackage[breaklinks]{hyperref}
\newtheorem{theorem}{Theorem}[section]
\newtheorem{lemma}[theorem]{Lemma}

\newtheorem{definition}[theorem]{Definition}
\usepackage{longtable}
\usepackage{threeparttablex}

\theoremstyle{remark}

\newtheorem{exm}{Example}

\title{On Right $S$-Noetherian Rings and $S$-Noetherian Modules}

\makeatletter
\renewcommand*{\@fnsymbol}[1]{\ifcase#1\or1\else\@arabic{\numexpr#1\relax}\fi}

\newcommand{\F}{\mathcal{F}}

\begin{document}

	\author{Zehra Bİlgİn}
	\address{Department of Mathematics, Yıldız Technical University, 34210, Esenler,
		İstanbul, TURKEY}
	\email{zbilgin@yildiz.edu.tr}
	
		\author{Manuel L. Reyes}
		\address{Department of Mathematics, Bowdoin College, 
			Brunswick, Me 04011-8486, USA}
		\email{reyes@bowdoin.edu}
		\thanks{M.L.~Reyes was partially supported by NSF grant no.\ DMS-1407152.}

\author{Ünsal Tekİr}
\address{Department of Mathematics, Marmara University, 34722, Göztepe,  
	İstanbul, TURKEY}
\email{utekir@marmara.edu.tr}

	\date{August 11, 2017}

	\keywords{Right $S$-Noetherian rings, completely prime right ideals, Oka families of right ideals, point annihilator sets}
	\subjclass[2010]{16D25, 16D80}

	\begin{abstract}
		In this paper we study right $S$-Noetherian rings and modules, extending notions introduced by Anderson and Dumitrescu in commutative algebra to noncommutative rings. Two characterizations of right $S$-Noetherian rings are given in terms of completely prime right ideals and point annihilator sets. We also prove an existence result for completely prime point annihilators of certain $S$-Noetherian modules with the following consequence in commutative algebra: If a module $M$ over a commutative ring is $S$-Noetherian with respect to a multiplicative set $S$ that contains no zero-divisors for $M$, then $M$ has an associated prime.
	\end{abstract}

	\maketitle
	
	\section{Introduction}
	
	Throughout this paper, all rings are associative with identity and all modules are unitary. In algebra, Noetherian rings have a distinguished place. Cohen's celebrated theorem gives a characterization of commutative Noetherian rings through prime ideals: A commutative ring is  Noetherian if and only if each of its prime ideals is finitely generated (see \cite[Theorem 2]{C}). 
	The problem of finding suitable extensions of commutative results in noncommutative algebra is often not as straightforward as it seems. For instance, if one uses the typical definition of a \emph{prime ideal} in a noncommutative ring (see \cite[Definition 5.1]{L}), then the naive extension Cohen's Theorem does not hold, as shown in \cite[Remark~2.11]{R3}. Nevertheless, extensions of Cohen's Theorem to the context of right ideals in noncommutative rings were achieved using using completely prime right ideals and Oka families of right ideals in \cite{R1}, and point annihilator sets in \cite{R2}. Following \cite{R1}, we say that a proper right ideal $P$ of $R$ is a \emph{completely prime right ideal} if for any $a,b\in R$ satisfying $aP\subseteq P$ and $ab\in P$ we have $a\in P$ or $b\in P$. Also, we recall that an \emph{Oka family of right ideals} (or a \emph{right Oka family}) in a (noncommutative) ring $R$ is a family $\mathcal{F}$ of right ideals with $R\in\mathcal{F}$ such that for any right ideal $I$ of $R$ and any element $a$ of $R$, 
	$$I+aR, a^{-1}I\in \mathcal{F} \Rightarrow I\in \mathcal{F},$$ 
where $a^{-1}I = \{r \in R : ar \in I\}$.
It was shown in~\cite[Theorem~3.8]{R1} that a ring is right noetherian if and only if each of its completely prime right ideals is finitely generated, with the proof relying crucially on the fact that the set of finitely generated right ideals in any ring $R$ is an Oka family.

	A generalization of commutative Noetherian rings, known as $S$-Noetherian rings, was defined by D.\,D.~Anderson and T.~Dumitrescu in \cite{A}. Several authors have continued the study of such rings, as in \cite{AS1}, \cite{AS2}, \cite{LO}, \cite{Z}. Let $R$ be a commutative ring with a multiplicative subset $S$. An ideal $I$ of $R$ is called \emph{$S$-finite} if $Is\subseteq J\subseteq I$ for some $s\in S$ and some finitely generated ideal $J$. The ring $R$ is called \emph{$S$-Noetherian} if each ideal of $R$ is $S$-finite. In \cite{A}, among other things, Anderson and Dumitrescu obtained an $S$-version of Cohen's Theorem. They proved that a commutative ring $R$ is $S$-Noetherian if and only if every prime ideal of $R$ disjoint from $S$ is $S$-finite (see \cite[Corollary 5]{A}). 
	
	In this paper, we seek to extend the concept of $S$-finiteness from~\cite{A} to a noncommutative setting, with the goal of retaining an $S$-version of Cohen's Theorem. As in \cite[Theorem 6.2]{R1} and \cite[Theorem 4.5]{R2}, we use Oka families as our tool to achieve this goal. The noncommutative extensions of the various $S$-finiteness properties that are suitable to this goal are as follows.
	Recall that a subset $S$ of a (not necessarily commutative) ring $R$ is said to be a \emph{multiplicative subset} if $1\in S$ and $ab\in S$ for any $a,b\in S$. 
	      \begin{definition}
	      	Let $R$ be a ring and $S\subseteq R$ a multiplicative subset. A right $R$-module $M$ is is said to be \emph{$S$-finite} if $Ms\subseteq F$ for some $s\in S$ and some finitely generated submodule $F$ of $M$, and $M$ is said to be \emph{$S$-Noetherian} if every submodule of $M$ is an $S$-finite module.
		A right ideal $I$ of $R$ is \emph{$S$-finite} if it is $S$-finite as a right $R$-module, and $R$ is called \emph{right $S$-Noetherian} if it the right $R$-module $M = R$ is $S$-noetherian.
	      \end{definition} 
	
	Our major results are as follows. In Theorem \ref{E:Cohen}, we give a noncommutative extension of Anderson and Dimitrescu's $S$-version of Cohen's theorem, characterizing right S-Noetherian rings in terms of $S$-finiteness of completely prime right ideals.
	We provide some equivalent conditions for a right $R$-module to be $S$-Noetherian in Theorem \ref{E:Noether} similar to the characterization of commutative Noetherian rings with the help of ascending chain condition and maximal condition.  
	  We apply point annihilator set for modules in the sense of \cite{R2} and obtain another characterization for S-Noetherian rings (see Theorem \ref{E:ann}). 
	Finally, in Theorem~\ref{E:nonzerodiv} we prove that a right $R$-module that is $S$-noetherian with respect to a set $S$ that contains no zero-divisors for $M$ contains a completely prime right annihilator; in case $R$ is commutative, this means that $M$ has an associated prime.

	   \section{Main Results} \label{E:Main}
	   
	     	Let $R$ be a ring. It is straightforward to see that finitely generated right ideals of $R$ are $S$-finite for any multiplicative subset $S$ of $R$. Therefore a right Noetherian ring is right $S$-Noetherian, as well. However, the converse does not always hold as the following example shows:
	     	
	     	\begin{exm}
			Let $R$ be a commutative ring and $S$ a multiplicative subset of $R$ such that $R$ is $S$-Noetherian but not Noetherian. (For example, one may take $R$ to be a non-Noetherian integral domain with $S = R \setminus \{0\}$; see~\cite[Proposition~2(a)]{A}.) Consider the ring $T$ of $2\times 2$ upper triangular matrices over $R$. Then $T$ is not right Noetherian since $R$ is not Noetherian (see \cite[Proposition 1.8]{GW}).The set $$S'=\left\{\begin{pmatrix}
			s & 0 \\ 
			0 & s
			\end{pmatrix}: s\in S\right\}$$ is a multiplicative subset of $T$. Let $J$ be a right ideal of $T$. Then, by \cite[Proposition 1.17]{L}, $J$ can be written as 
			$$\begin{pmatrix}
			I_1 & I_2 \\ 
			0 & I_3
			\end{pmatrix} $$
			where $I_1$, $I_2$, and $I_3$ are ideals of $R$ satisfying $I_1\subseteq I_2$. Since $R$ is $S$-Noetherian, there exist $s_i \in S$ and finitely generated ideals $F_i \subseteq I_i$ such that each $I_i s_i \subseteq F_i$ for $i = 1,2,3$. Without loss of generality, we may assume that $F_1 \subseteq F_2$. Then for $s = s_1 s_2 s_3 \in S$ we have
			$$J\begin{pmatrix}
			s & 0 \\ 
			0 & s
			\end{pmatrix}\subseteq \begin{pmatrix}
			F_1 & F_2 \\ 
			0 & F_3
			\end{pmatrix} = F,$$
			where $F$ is a finitely generated right ideal of $T$. Thus $T$ is right $S'$-Noetherian.\\
		
	     	\end{exm}
	    
	    We aim to prove that to test whether a ring is right $S$-Noetherian or not it is enough to control only completely prime right ideals. To this end, we first show that the set of all $S$-finite right ideals of a ring $R$ is a right Oka family.
	    
	    	\begin{lemma}\label{E:Oka}
	    		Let $R$ be a ring and $S\subseteq R$ a multiplicative subset. The set of all $S$-finite right ideals of a ring $R$ is a right Oka family.
	    	\end{lemma}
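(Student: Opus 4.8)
The plan is to verify the two defining conditions of a right Oka family for the family $\mathcal{F}$ of $S$-finite right ideals. The condition $R \in \mathcal{F}$ is immediate, since $R = 1\cdot R$ is finitely generated and hence $S$-finite (take $s = 1$). The substance is the implication: given a right ideal $I$ and an element $a \in R$ such that $I + aR$ and $a^{-1}I$ are both $S$-finite, I must show that $I$ is $S$-finite.

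Two closure properties of $S$-finiteness will do most of the work, and I would isolate them first. First, $S$-finiteness passes to quotients: if $f\colon M \twoheadrightarrow N$ is a surjection of right modules and $Ms \subseteq F$ with $F$ finitely generated, then $Ns = f(Ms) \subseteq f(F)$ with $f(F)$ finitely generated, so $N$ is $S$-finite. Second, $S$-finiteness is closed under extensions: if $N \le M$ with both $N$ and $M/N$ being $S$-finite, then $M$ is $S$-finite. For the latter, choose $s_1 \in S$ and a finitely generated $F_1 \subseteq N$ with $Ns_1 \subseteq F_1$, and choose $s_2 \in S$ together with finitely many $m_k \in M$ whose images generate a finitely generated submodule $\bar{F}_2 \subseteq M/N$ with $(M/N)s_2 \subseteq \bar{F}_2$. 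Setting $F_2 = \sum_k m_k R$ one gets $Ms_2 \subseteq F_2 + N$; multiplying on the right by $s_1$ yields $Ms_2 s_1 \subseteq F_2 s_1 + N s_1 \subseteq F_2 + F_1$, so with $s = s_2 s_1 \in S$ and the finitely generated $F = F_1 + F_2 \subseteq M$ we obtain $Ms \subseteq F$.

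With these tools in hand, I would exploit the identity $I \cap aR = a(a^{-1}I)$, which holds because $ar \in I$ precisely when $r \in a^{-1}I$. Since $a^{-1}I$ is $S$-finite, pick $t \in S$ and a finitely generated $E \subseteq a^{-1}I$ with $(a^{-1}I)t \subseteq E$; then $aE = \sum (ab_j)R$ is a finitely generated right ideal contained in $a(a^{-1}I)$ satisfying $\bigl(a(a^{-1}I)\bigr)t \subseteq aE$, so $I \cap aR = a(a^{-1}I)$ is $S$-finite. On the other hand, $I/(I \cap aR) \cong (I + aR)/aR$ is a quotient of the $S$-finite module $I + aR$, hence $S$-finite by the quotient property. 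Applying the extension property to the submodule $N = I \cap aR$ of $M = I$ then shows that $I$ is $S$-finite, completing the verification.

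I expect the main obstacle to be organizational rather than conceptual: proving the extension property with the noncommutativity handled correctly, in particular taking the witnessing element of $S$ as the product $s_2 s_1$ in the correct order (since $S$ need not be central) and checking that each $F_i$ is genuinely a right submodule closed under the relevant right multiplications. Once the quotient and extension closure properties are in place, the identity $I \cap aR = a(a^{-1}I)$ and the isomorphism $I/(I\cap aR) \cong (I+aR)/aR$ reduce the whole statement to these two properties.
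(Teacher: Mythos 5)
Your proof is correct, but it is organized quite differently from the paper's. The paper argues directly on elements: it picks witnesses $J,K$ for the $S$-finiteness of $I+aR$ and $a^{-1}I$, rewrites the generators of $J$ as $x_i + ar_i$ with $x_i \in I$, sets $I_0 = \sum x_i R$, deduces $Is \subseteq I_0 + a(a^{-1}I)$, and then multiplies by $t$ to land in the finitely generated right ideal $I_0 + aK \subseteq I$. You instead isolate two general closure properties of $S$-finiteness (passage to quotients and closure under extensions), and reduce the Oka condition to the short exact sequence $0 \to I \cap aR \to I \to I/(I\cap aR) \to 0$ together with the identity $I \cap aR = a(a^{-1}I)$ and the isomorphism $I/(I\cap aR) \cong (I+aR)/aR$. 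This is the classical Lam--Reyes template by which Oka families arise from module classes closed under extensions, here adapted to $S$-finiteness; your handling of the noncommutativity (the witness $s_2 s_1$ in the correct order, $F_2 s_1 \subseteq F_2$ because $F_2$ is a right submodule) is sound. What your route buys is reusability and conceptual placement: the quotient/extension closure of $S$-finite modules is of independent interest and parallels the paper's later (unproved) remark that $S$-Noetherianness behaves well in short exact sequences. What the paper's route buys is brevity and self-containment: it never invokes the isomorphism theorems and produces the witness $st$ and the finitely generated ideal in a single computation. Both arguments yield the same quantitative conclusion, namely that $I$ multiplied by a product of two elements of $S$ lands in a finitely generated right ideal inside $I$.
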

	    	\begin{proof}
	    		Since $R$ is finitely generated it is $S$-finite. So, we have $R\in \mathcal{F}$. Let $I$ be a right ideal of $R$ and $a\in R$ such that $I+aR$ and $a^{-1}I$ are $S$-finite. Then, there exist $s,t\in S$ and finitely generated right ideals $J,K$ of $R$ such that $(I+aR)s\subseteq J\subseteq I+aR$ and $(a^{-1}I)t\subseteq K\subseteq a^{-1}I$. 
Because $J \subseteq I+aR$, we may write its finitely many generators in the form $x_i + ar_i$ for some $x_i \in I, r_i \in R$. Then for the finitely generated right ideal $I_0 = \sum x_i R \subseteq I$, we have $J = \sum (x_i + ar_i) R \subseteq I_0 + aR$.
One may then verify from $Is \subseteq J \subseteq I_0 + aR$ that in fact $Is\subseteq I_0+a(a^{-1}I)$. Therefore we obtain
	    		$$Ist\subseteq I_0t+a(a^{-1}I)t\subseteq I_0+aK.$$
	    		Note that $I_0$ and $aK$ are a finitely generated right ideals contained in $I$. So from
	    		$$Ist\subseteq  I_0+aK\subseteq I,$$
	    		we conclude that $I$ is $S$-finite.
	    	\end{proof}
	    	For a family of right ideals $\mathcal{F}$ of a ring $R$, denote the complement of $\mathcal{F}$ within the set of the right ideals of $R$ as $\mathcal{F}'$.
	    	Let $\mathcal{F}$ be a right Oka family in a ring $R$. It is shown in \cite[Theorem 3.6] {R1} that if every nonempty chain of right ideals in $\mathcal{F}'$ has an upper bound in $\mathcal{F}'$ and every completely prime right ideal is in $\mathcal{F}$, then $\mathcal{F}$ contains all right ideals of $R$.
	    	\begin{theorem}\label{E:Cohen}
	    		Let $R$ be a ring and $S\subseteq R$ a multiplicative subset. Then $R$ is right $S$-Noetherian if and only if every completely prime right ideal of $R$ is $S$-finite. 
	    	\end{theorem}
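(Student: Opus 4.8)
The plan is to derive the nontrivial implication from the Oka-family machinery already assembled, so that the theorem falls out of Lemma~\ref{E:Oka} together with the closure criterion recalled from \cite[Theorem 3.6]{R1}. The forward direction is immediate: if $R$ is right $S$-Noetherian, then every right ideal is $S$-finite, and in particular every completely prime right ideal is $S$-finite. So I would concentrate entirely on the converse.

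For the converse, let $\mathcal{F}$ denote the family of all $S$-finite right ideals of $R$. By Lemma~\ref{E:Oka}, $\mathcal{F}$ is a right Oka family, which puts me in a position to invoke \cite[Theorem 3.6]{R1}: to conclude that $\mathcal{F}$ contains every right ideal, it suffices to check that (i) every nonempty chain of right ideals in $\mathcal{F}'$ has an upper bound in $\mathcal{F}'$, and (ii) every completely prime right ideal lies in $\mathcal{F}$. Condition (ii) is exactly the standing hypothesis that each completely prime right ideal is $S$-finite, so there is nothing to prove there; the substance of the argument is entirely in establishing (i).

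To verify (i), I would take a chain $\{I_\alpha\}$ of non-$S$-finite right ideals and consider its union $I=\bigcup_\alpha I_\alpha$, which is again a right ideal and is the natural candidate for the upper bound. The claim is that $I\in\mathcal{F}'$, and I would argue the contrapositive: if $I$ were $S$-finite, then some $I_\alpha$ would already be $S$-finite, contradicting the choice of the chain. Indeed, suppose $Is\subseteq F\subseteq I$ for some $s\in S$ and some finitely generated right ideal $F$. Since $F$ has only finitely many generators, all of them lie in $I=\bigcup_\alpha I_\alpha$, and because the $I_\alpha$ are totally ordered by inclusion, a single index $\alpha$ can be chosen with $F\subseteq I_\alpha$. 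Then $I_\alpha s\subseteq Is\subseteq F\subseteq I_\alpha$, which exhibits $I_\alpha$ as $S$-finite. Hence $I\in\mathcal{F}'$, and it is clearly an upper bound for the chain, so (i) holds.

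With both hypotheses in hand, \cite[Theorem 3.6]{R1} yields that $\mathcal{F}$ contains every right ideal of $R$, i.e., every right ideal is $S$-finite, which is precisely the assertion that $R$ is right $S$-Noetherian. The only step demanding a genuine argument---and the one I expect to be the main obstacle---is the reduction in (i) that forces the finitely many generators of $F$ into a single member of the chain; everything else is either a direct citation (the Oka property from Lemma~\ref{E:Oka}, the closure criterion from \cite[Theorem 3.6]{R1}) or a restatement of the hypothesis.
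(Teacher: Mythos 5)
Your proposal is correct and follows essentially the same route as the paper's own proof: both directions are handled identically, with the converse reduced to Lemma~\ref{E:Oka} plus \cite[Theorem 3.6]{R1}, and the chain condition verified by the same argument that a finitely generated witness $F$ for the union must land inside a single member $I_\alpha$ of the chain, forcing $I_\alpha s \subseteq F \subseteq I_\alpha$ and a contradiction. No gaps; nothing further is needed.
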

	    	\begin{proof}
	    		If $R$ is right $S$-noetherian, then it is obvious that every completely prime right ideal of $R$ is $S$-finite. To prove the converse, let $\mathcal{F}$ be the set of all $S$-finite right ideals of $R$, and assume that every completely prime right ideal belongs to $\F$. By Lemma \ref{E:Oka}, the set $\mathcal{F}$ is a right Oka family. Let $\{I_\alpha\}_{\alpha\in A}$ be a nonempty chain in $\mathcal{F}'$. Set $I=\bigcup_{\alpha\in A}I_\alpha$. Clearly, $I$ is a right ideal. Assume that $I$ is $S$-finite. Then $Is\subseteq J\subseteq I$ for some $s\in S$ and finitely generated right ideal $J$ of $R$. Let $J=(x_1,..,x_k)$ for some $x_i\in R$. Since $J\subseteq I$ there is an index $\beta\in A$ such that $J\subseteq I_\beta$. Then, we get $I_\beta s\subseteq J\subseteq I_\beta$, which contradicts the assumption that $I_\beta$ is non-$S$-finite. Hence $I$ is an element of $\mathcal{F}'$. This shows that every nonempty chain in $\mathcal{F}'$ contains an upper bound in $\mathcal{F}'$. Because all completely prime right ideals of $R$ belong to $\F$, it follows from \cite[Theorem~3.6]{R1} that all right ideals of $R$ belong to $\mathcal{F}.$ Therefore $R$ is right $S$-Noetherian.
	    	\end{proof}

	    	Observe that a ring $R$ is right $S$-Noetherian if and only if the right $R$-module $R$ is $S$-Noetherian. For any short exact sequence
	    	$$0 \rightarrow M'\rightarrow M\rightarrow M''\rightarrow 0$$
	    	of right $R$-modules, $M$ is $S$-Noetherian if and only if $M'$ and $M''$ are $S$-Noetherian. In particular, if $R$ is right $S$-Noetherian, so is every finitely generated right $R$-module.
	    	
	    	Let $R$ be a ring with a multiplicative subset $S$, and let $M$ be a right $R$-module. We recall some notions introduced in \cite{AS2}, which extend routinely to the noncommutative setting. A chain of submodules $\{N_i\}_{i \in I}$ of a right $R$-module $M$ is said to be \emph{$S$-stationary} if there exists $j \in I$ and $s\in S$ such that $N_i s\subseteq N_j$ for all $i \in I$. Let $\mathcal{F}$ be a family of submodules of $M$. An element $N\in \mathcal{F}$ is called \emph{$S$-maximal} if there exists an $s\in S$ such that for each $L\in \mathcal{F}$, if $N\subseteq L$ then $Ls \subseteq N$. 

For a commutative ring $R$, Ahmed and Sana proved in \cite[Theorem 2.1]{AS2} that if $M$ is $S$-Noetherian, then every increasing sequence of extended submodules of $M$ is $S$-stationary. A submodule $N$ of $M$ is called \emph{extended} if there is an ideal $I$ of $R$ such that $N=IM$. In \cite[Corollary 2.1]{AS2}, the authors also obtained equivalent conditions for being an $S$-Noetherian ring when $S$ is finite: $R$ is $S$-Noetherian if and only if every increasing sequence of ideals of $R$ is $S$-stationary, if and only if every nonempty set of ideals of $R$ has an $S$-maximal element.  

We extend these observations to modules over noncommutative rings with no restrictions on the multiplicative subset, obtaining equivalent conditions for a module to be $S$-Noetherian. We define a family $\F$ of submodules of a right $R$-module $M$ to be \emph{$S$-saturated} if it satisfies the following property: for every submodule $N$ of $M$, if there exist $s \in S$ and $N_0 \in \F$ such that $Ns \subseteq N_0$, then $N \in \F$.
	    	
	    	\begin{theorem}\label{E:Noether}
	    		Let $S$ be a multiplicative subset of $R$ and $M$ a right $R$-module. The following are equivalent:
	    		\begin{enumerate}[(i)]
					\item $M$ is $S$-Noetherian.
					\item Every nonempty chain of submodules of $M$ is $S$-stationary.
					\item Every nonempty $S$-saturated set of submodules of $M$ has a maximal element.
					\item Every nonempty set of submodules of $M$ has an $S$-maximal element.
	    		\end{enumerate}  
	    	\end{theorem}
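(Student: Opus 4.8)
The plan is to prove the four statements equivalent by establishing the cycle $\text{(i)} \Rightarrow \text{(ii)} \Rightarrow \text{(iii)} \Rightarrow \text{(iv)} \Rightarrow \text{(i)}$. Two of these arrows are short. For $\text{(i)} \Rightarrow \text{(ii)}$, given a chain $\{N_i\}_{i\in I}$ I would pass to the union $N=\bigcup_i N_i$, which is again a submodule; by $S$-Noetherianness $N$ is $S$-finite, so $Ns\subseteq F\subseteq N$ for some $s\in S$ and a finitely generated $F$. Since the finitely many generators of $F$ lie in the union of a chain, they all lie in a single $N_j$, whence $F\subseteq N_j$ and $N_i s\subseteq Ns\subseteq F\subseteq N_j$ for every $i$, which is exactly $S$-stationarity. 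For $\text{(iv)} \Rightarrow \text{(i)}$, given an arbitrary submodule $N$ I would apply (iv) to the family $\mathcal{G}$ of all finitely generated submodules contained in $N$ to obtain an $S$-maximal element $F$ with witness $s$; feeding each $F+xR$ (for $x\in N$) into the $S$-maximality of $F$ yields $(F+xR)s\subseteq F$, so $xs\in F$, and therefore $Ns\subseteq F$ with $F$ finitely generated, i.e.\ $N$ is $S$-finite.

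For $\text{(ii)} \Rightarrow \text{(iii)}$ I would invoke Zorn's Lemma on an $S$-saturated family $\mathcal{F}$ ordered by inclusion. The one thing to verify is that every chain $\{N_i\}$ in $\mathcal{F}$ has an upper bound lying in $\mathcal{F}$: its union $N$ is $S$-stationary by (ii), so $Ns=\bigcup_i N_i s\subseteq N_j$ for some index $j$ and some $s\in S$ with $N_j\in\mathcal{F}$, and then $S$-saturation of $\mathcal{F}$ forces $N\in\mathcal{F}$. Hence $\mathcal{F}$ meets the hypothesis of Zorn's Lemma and has a maximal element.

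The step I expect to be the main obstacle is $\text{(iii)} \Rightarrow \text{(iv)}$, since one must extract a genuine $S$-maximal element of an \emph{arbitrary} family $\mathcal{G}$ from a plain maximal element of an auxiliary saturated family, and $\mathcal{G}$ itself need not be closed under sums. My plan is to form the $S$-saturation $\overline{\mathcal{G}}=\{N : Ns\subseteq G \text{ for some } s\in S \text{ and } G\in\mathcal{G}\}$, check that it is $S$-saturated (here the multiplicative closure of $S$ is used, via $st\in S$) and nonempty, and apply (iii) to obtain a maximal $N^{*}\in\overline{\mathcal{G}}$. Choosing $s^{*}\in S$ and $G^{*}\in\mathcal{G}$ with $N^{*}s^{*}\subseteq G^{*}$, the crucial observations are that $N^{*}+G^{*}$ lies in $\overline{\mathcal{G}}$ (right multiplication by $s^{*}$ sends it into $G^{*}$, using $G^{*}s^{*}\subseteq G^{*}$) and that, for any $L\in\mathcal{G}$ with $G^{*}\subseteq L$, the submodule $N^{*}+L$ also lies in $\overline{\mathcal{G}}$ (right multiplication by $s^{*}$ sends it into $G^{*}+L=L$). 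Maximality of $N^{*}$ then forces $G^{*}\subseteq N^{*}$ and $L\subseteq N^{*}$, so $Ls^{*}\subseteq N^{*}s^{*}\subseteq G^{*}$; this shows $G^{*}$ is an $S$-maximal element of $\mathcal{G}$ with witness $s^{*}$, completing the cycle. The delicate point, and the reason the hypothesis $G^{*}\subseteq L$ in the definition of $S$-maximality is exactly what is needed, is that only then does $G^{*}+L$ collapse to $L\in\mathcal{G}$, keeping $N^{*}+L$ inside the saturated family.
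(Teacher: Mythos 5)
Your proposal is correct and follows essentially the same route as the paper: the same cycle (i)$\Rightarrow$(ii)$\Rightarrow$(iii)$\Rightarrow$(iv)$\Rightarrow$(i), with the union-of-chain argument, Zorn's Lemma on the $S$-saturated family, the saturation $\overline{\mathcal{G}}$ (the paper's $\F^S$) with the identical $(N^*+L)s^* \subseteq G^*+L = L$ trick, and the family of finitely generated submodules for the last step. The only difference is cosmetic: your separate verification that $G^* \subseteq N^*$ is a special case of your own step for general $L \supseteq G^*$ and could be omitted.
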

	    	
	    	\begin{proof}
				(i)$\Rightarrow$(ii) Let $M$ be an $S$-Noetherian right $R$-module and $\{K_i\}_{i \in I}$ a nonempty chain of submodules, indexed by $I$. The set $K=\bigcup_{i\in I}K_i$ is a submodule of $M$ and is $S$-finite. So, there is an $s\in S$ and a finitely generated submodule $F$ of $K$ such that $Ks\subseteq F\subseteq K$. Since $F$ is finitely generated, there is a $j\in I$ satisfying $F\subseteq K_j$. Then we have $Ks\subseteq F\subseteq K_j$, from which it follows that $K_i s\subseteq K_j$ for each $i\in I$.

				(ii)$\Rightarrow$(iii) Let $\F$ be an $S$-saturated set of submodules of $M$. Given any chain $\{N_i\}_{i \in I} \subseteq \F$, we claim that $N = \bigcup N_i$ is in $\F$, which will imply that $N \in \F$ is an upper bound for the chain. Indeed, by~(ii) there exists $N_j$ and $s \in S$ such that $N_i s \subseteq N_j$ for all $i \in I$. Thus $N s = \left(\bigcup N_i \right)s \subseteq N_j$. Because $\F$ is $S$-saturated, it follows that $N \in \F$ as desired. Now Zorn's lemma implies that $\F$ has a maximal element.

				(iii)$\Rightarrow$(iv) Let $\F$ be a nonempty set of submodules of $M$. Consider the family $\F^S$ of all submodules $L \subseteq M$ such that there exist some $s \in S$ and $L_0 \in \F$ with $Ls \subseteq L_0$. It is straightforward to see that $\F^S$ is $S$-saturated. Thus~(iii) implies that $\F^S$ has a maximal element $V \in \F^S$. Fix $s \in S$ and $N \in \F$ such that $Vs \subseteq N$. Now we claim that $N$ is an $S$-maximal element of $\F$; specifically, given $L \in \F$ with $N \subseteq L$, we will show that $L s \subseteq N$. Note that $V + L$ satisfies
\[
	(V+L)s = Vs + L s \subseteq N + L = L,
\]
so that $V+L \in \F^S$. Maximality of $V$ implies $V = V + L$, so that $L \subseteq V$. But then $L s \subseteq Vs \subseteq N$ as desired.
				
				(iv)$\Rightarrow$ (i) Suppose that (iv) holds. Let $N$ be a submodule of $M$, which we will prove to be $S$-finite. Let $\F$ denote the family of finitely generated submodules of $N$, which is nonempty as $0 \in \F$. Then $\F$ has an $S$-maximal element $F \in \F$. Fix $s \in S$ such that whenever $F \subseteq L$ with $L \in \F$, it follows that $Ls \subseteq F$. We claim that in fact $Ns \subseteq F$. Indeed, fixing $x \in N$, the submodule $L = F + xR$ of $N$ is finitely generated and contains $F$. Therefore $Ls \subseteq F$; in particular, $xs \in F$. This verifies $Ns \subseteq F$, so that $N$ is $S$-finite. Thus $M$ is $S$-Noetherian.
	    	\end{proof}

	    	In the study of noncommutative extensions of Cohen's Theorem in \cite{R2}, it proved useful to introduce point annihlator sets for classes of modules. A \emph{point annihilator} of a right $R$-module $M$ is an annihilator of a nonzero element $m$ of $M$. Let $\mathcal{C}$ be a class of right $R$-modules. A set $\mathcal{S}$ of right ideals of $R$ is a \emph{point annihilator set} for $\mathcal{C}$ if every nonzero $M\in\mathcal{C}$ has a point annihilator that lies in $\mathcal{S}$. In particular, a point annihilator set for the class of all right $R$-modules is said to be a right point annihilator set for $R$, and a point annihilator set for the class of all Noetherian right $R$-modules is called a right Noetherian point annihilator set for $R$. Note that if $R$ is a right Noetherian ring, a right point annihilator set for $R$ is the same as a right Noetherian point annihilator set for $R$.
	    	
	    	In a similar manner, we define right $S$-Noetherian point annihilator sets for $R$ as follows:
	    	\begin{definition}
	    		A point annihilator set for the class of all $S$-Noetherian right $R$-modules is said to be a right $S$-Noetherian point annihilator set for $R$.
	    		\end{definition} 
	    		Note that if $R$ is right $S$-Noetherian, then every right $S$-Noetherian point annihilator set for $R$ is a right point annihilator set for $R$.

	    Right $S$-Noetherian point annihilator sets do exist. The following example illustrates this concept.
	    
	    \begin{exm} Let $R$ be a commutative ring and $S$ a multiplicative subset of $R$. Let $\mathcal{F}_0$ be the set of all proper ideals of $R$ which meet $S$. We claim that $\mathcal{F}=Spec(R)\cup \mathcal{F}_0$ is an $S$-Noetherian point annihilator set for $R$. Let $M$ be a nonzero $S$-Noetherian $R$-module. For $R'=R/Ann_R(M)$, we have $M$ a faithful $S$-Noetherian $R'$-module, and so, $R'$ is $S$-Noetherian. The set $\mathcal{G}$ of ideals that are annihilators of nonzero elements of the $R'$-module $M$ is nonempty. So, by Theorem \ref{E:Noether}, there is a nonzero element $x\in M$ such that $I=Ann_{R'}(x)$ is $S$-maximal in $\mathcal{G}$. Note that $I$ is proper because $x \neq 0$. Assume that $I$ is not prime. We wish to show that $I\cap S\neq\varnothing$. Let $ab\in I$ for some $a,b\in R'$. Assume that $a\not \in I$. Then $ax\neq 0$. Then $Ann_{R'}(ax)$ properly contains the ideal $I$. Since $I$ is $S$-maximal, there is an $s\in S$ satisfying $Ann_{R'}(ax)s\subseteq I$. Then we have $bs\in I$. If $b\not\in I$, using a similar argument,  we get $st\in I$. Hence $I\cap S\neq \varnothing$.  Observe that $I=Ann_R(x)/Ann_R(M)$, and hence either $Ann_R(x)$ is prime or $Ann_R(x)\cap S\neq \varnothing$. This proves that $\mathcal{F}$ is an $S$-Noetherian point annihilator set for $R$.
	    	
	    	\end{exm}

	    Right $S$-Noetherian point annihilator sets provide another noncommutative generalization of Cohen's Theorem.
	    
	    \begin{theorem}\label{E:ann}
	    	Let $S$ be a multiplicative subset of $R$ and $\mathcal{T}$ is a right $S$-Noetherian point annihilator set for $R$. The following are equivalent:
	    	\begin{enumerate}[(i)]
	    		\item $R$ is right $S$-Noetherian.
	    		\item Every right ideal in $\mathcal{T}$ is $S$-finite.
	    		\item Every nonzero $S$-Noetherian right $R$-module has an $S$-finite point annihilator.
	    		\end{enumerate}
	    		 
	\end{theorem}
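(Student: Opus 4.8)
The plan is to establish the cycle of implications (i)$\Rightarrow$(ii)$\Rightarrow$(iii)$\Rightarrow$(i). The implication (i)$\Rightarrow$(ii) is immediate: if $R$ is right $S$-Noetherian then every right ideal of $R$ is $S$-finite, so in particular every right ideal lying in $\mathcal{T}$ is $S$-finite. For (ii)$\Rightarrow$(iii), let $M$ be a nonzero $S$-Noetherian right $R$-module. Because $\mathcal{T}$ is a right $S$-Noetherian point annihilator set, $M$ has a point annihilator $P$ that lies in $\mathcal{T}$, and (ii) guarantees that $P$ is $S$-finite; this $P$ is the required $S$-finite point annihilator of $M$. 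Note that $\mathcal{T}$ is used only at this step, where it serves as the bridge between the hypothesis on $\mathcal{T}$ in (ii) and the statement about arbitrary $S$-Noetherian modules in (iii).

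The substance of the theorem is (iii)$\Rightarrow$(i), which I would prove by contradiction. Suppose $R$ is not right $S$-Noetherian, so that the family $\Sigma$ of non-$S$-finite right ideals of $R$ is nonempty. The chain computation already carried out in the proof of Theorem~\ref{E:Cohen} shows that the union of any nonempty chain of non-$S$-finite right ideals is again non-$S$-finite, since an $S$-finite union would have its finitely generated approximating ideal contained in a single member of the chain, forcing that member to be $S$-finite. Thus $\Sigma$ is closed under unions of chains, and Zorn's lemma yields a maximal element $P \in \Sigma$. As $R$ itself is $S$-finite, $P$ is proper.

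The key step is then to show that the cyclic module $R/P$ is $S$-Noetherian, so that hypothesis (iii) applies to it. Every submodule of $R/P$ has the form $Q/P$ with $Q \supseteq P$ a right ideal. If $Q = P$ the submodule is zero, hence $S$-finite; if $Q \supsetneq P$, then maximality of $P$ forces $Q$ to be $S$-finite, say $Qs \subseteq F \subseteq Q$ with $F$ finitely generated, and passing to the quotient gives $(Q/P)s \subseteq (F+P)/P \subseteq Q/P$ with $(F+P)/P$ finitely generated, so $Q/P$ is $S$-finite. Hence $R/P$ is $S$-Noetherian, and (iii) provides a nonzero element $q + P$ of $R/P$ (so $q \notin P$) whose annihilator $q^{-1}P = \{r \in R : qr \in P\}$ is $S$-finite. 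Since $q \notin P$ we have $P + qR \supsetneq P$, so $P + qR$ is $S$-finite by maximality of $P$. Because the $S$-finite right ideals form a right Oka family by Lemma~\ref{E:Oka}, the defining implication applied to $I = P$ and $a = q$ (namely $P + qR, q^{-1}P \in \mathcal{F} \Rightarrow P \in \mathcal{F}$) shows that $P$ is $S$-finite, contradicting $P \in \Sigma$.

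I expect the main obstacle to be the verification that $R/P$ is $S$-Noetherian: one must convert the maximality of $P$ among non-$S$-finite right ideals into $S$-finiteness of every submodule of the quotient, while checking that finite generation survives the quotient map. Once that is secured, the decisive move is to recognize the point annihilator furnished by (iii) as an ideal of the form $q^{-1}P$ and to feed it, together with $P + qR$, into the Oka family property of Lemma~\ref{E:Oka}; complete primeness of $P$ is not needed for this argument, since maximality alone drives both the $S$-Noetherianity of $R/P$ and the final application of the Oka property.
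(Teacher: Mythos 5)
Your proof is correct, and it takes a genuinely different route from the paper's. The paper also works with the Oka family $\mathcal{F}$ of $S$-finite right ideals (Lemma \ref{E:Oka}), verifies the chain condition on $\mathcal{F}'$, and makes the same key observation you do --- that $R/I$ is $S$-Noetherian for any maximal element $I$ of $\mathcal{F}'$ --- but it then proves (ii)$\Rightarrow$(i) and (iii)$\Rightarrow$(i) separately, delegating the decisive steps to cited results from Reyes' earlier work (\cite[Theorems 4.1(3) and 4.3(3)]{R2}), which encapsulate exactly the ``maximal counterexample plus Oka property'' mechanism. You instead arrange the implications in a cycle (i)$\Rightarrow$(ii)$\Rightarrow$(iii)$\Rightarrow$(i), which lets you dispatch (ii)$\Rightarrow$(iii) in one line straight from the definition of a right $S$-Noetherian point annihilator set, leaving only a single hard implication; and you prove that implication self-containedly, unwinding the $\cite{R2}$ machinery into an explicit Zorn's lemma argument, the identification of the point annihilator of $q+P \in R/P$ as $q^{-1}P$, and a direct application of the Oka property to $I = P$, $a = q$. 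Your approach buys a fully self-contained proof that makes visible where each hypothesis enters (in particular, that $\mathcal{T}$ is used only in (ii)$\Rightarrow$(iii), and that complete primeness of the maximal ideal $P$ is never needed); the paper's approach buys brevity and situates the theorem as an instance of the general Oka-family/point-annihilator-set framework, which is the conceptual point of the paper. Your handling of the quotient --- checking that $(F+P)/P$ is finitely generated and that $(Q/P)s \subseteq (F+P)/P$ --- is exactly the verification needed, so there is no gap.
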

	\begin{proof}
		The family of $S$-finite right ideals is an Oka family by Lemma \ref{E:Oka}. Observe that every nonempty chain in $\mathcal{F}'$ has an upper bound in $\mathcal{F'}$. Let $\text{Max}(\mathcal{F}')$ denote the set of maximal elements in $\mathcal{F}'$. Let $I\in \text{Max}(\mathcal{F}')$, that is, $I$ is a maximal non $S$-finite right ideal. Any nonzero submodule of the right $R$-module $R/I$ is the image of a right ideal properly containing $I$, which must be $S$-finite, thus $R/I$ is an $S$-Noetherian right $R$-module. That means the set $\{R/I: I\in\text{Max}(\mathcal{F}')\}$ consists of $S$-Noetherian right $R$-modules. It is obvious that (i)$\Rightarrow$ (ii) and (i) $\Rightarrow$ (iii). For (ii)$\Rightarrow$ (i), suppose that every right ideal in $\mathcal{T}$ is $S$-finite. Then $\mathcal{T}\subseteq\mathcal{F}$. By \cite[Theorem 4.3(3)]{R2}, we conclude that every right ideal of $R$ belongs to $\mathcal{F}$. That is, the ring $R$ is $S$-Noetherian.  For (iii)$\Rightarrow$ (i), assume that every nonzero $S$-Noetherian right $R$-module has an $S$-finite point annihilator. Then $\mathcal{F}$ is a point annihilator set for the set $\{R/I: I\in\text{Max}(\mathcal{F}')\}$. Hence $\mathcal{F}$ consists of all right ideals of $R$, by \cite[Theorem 4.1(3)]{R2}. Thus $R$ is $S$-Noetherian.
	\end{proof}
	
	In \cite[Proposition 3.10]{R2}, it is shown that the set of completely prime right ideals of a ring $R$ is a right Noetherian point annihilator set for $R$. However, it is not always a right $S$-Noetherian point annihilator set for any multiplicative subset $S$ of $R$. For an extreme example, consider $T=\mathbb{C}[x_1,x_2,...]$, the polynomial ring over $\mathbb{C}$ in infinitely many variables and let $I=(x_1,x_2^2,x_3^3,..)$. Let $S$ be a multiplicative subset of the ring $R=T/I$, containing 0. Then every $R$-module $M$ will be trivially $S$-Noetherian. The ring $R$ does not possess any associated primes \cite[Example 5.2.6]{CR}. We conclude that no point annihilator of the the $S$-Noetherian module $M = R$ is (completely) prime.
	
	It is an open question for which multiplicative subsets $S$ of a ring $R$ the set of completely prime right ideals is an $S$-Noetherian point annihilator set for $R$. While we do not have an answer to this question, we will provide a positive result on the existence of a completely prime point annihilators for certain modules in Theorem~\ref{E:nonzerodiv} below. 

We will require the following result, which was stated without proof in~\cite[p.~3014]{R1}. We include a proof for the sake of completeness.

\begin{lemma}\label{E:Oka ann}
Let $R$ be a ring and let $M$ be a right $R$-module. The family 
	$$\mathcal{F} = \{I : \text{ for all } m\in M, mI = 0 \text{ implies } m = 0\}$$ 
of right ideals of $R$ is a right Oka family.
\end{lemma}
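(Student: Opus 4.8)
The plan is to verify the two defining conditions of a right Oka family directly from the description of $\mathcal{F}$. First I would check the easy condition $R \in \mathcal{F}$: if $m \in M$ satisfies $mR = 0$, then in particular $m = m\cdot 1 = 0$, so the only element of $M$ annihilated by $R$ is $0$, and hence $R \in \mathcal{F}$.

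For the substantive implication, fix a right ideal $I$ and an element $a \in R$ with $I + aR \in \mathcal{F}$ and $a^{-1}I \in \mathcal{F}$, and suppose $m \in M$ satisfies $mI = 0$; the goal is to deduce $m = 0$. The key step is to first establish that $ma = 0$. To do this I would test the element $ma$ against the right ideal $a^{-1}I$: for any $r \in a^{-1}I$ we have $ar \in I$ by the very definition of $a^{-1}I$, and therefore $(ma)r = m(ar) \in mI = 0$. This shows $(ma)(a^{-1}I) = 0$, and since $a^{-1}I \in \mathcal{F}$, the defining property of $\mathcal{F}$ forces $ma = 0$.

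Once $ma = 0$ is in hand, the conclusion follows quickly. From $ma = 0$ we obtain $m(aR) = (ma)R = 0$, and together with the hypothesis $mI = 0$ this gives $m(I + aR) = mI + m(aR) = 0$. Because $I + aR \in \mathcal{F}$, we conclude $m = 0$, which is exactly what was required. Hence no nonzero element of $M$ is annihilated by $I$, so $I \in \mathcal{F}$, and $\mathcal{F}$ is a right Oka family.

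I do not anticipate a genuine obstacle: the argument is a short direct verification. The only points demanding care are applying the two hypotheses in the correct order — using $a^{-1}I \in \mathcal{F}$ to kill $ma$ first, and only then using $I + aR \in \mathcal{F}$ to kill $m$ — and keeping track of the defining relation $ar \in I$ for $r \in a^{-1}I$, which is precisely the bridge that converts annihilation by $I$ into annihilation by $a^{-1}I$.
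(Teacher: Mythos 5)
Your proof is correct and follows essentially the same argument as the paper's: use $a^{-1}I \in \mathcal{F}$ together with $(ma)(a^{-1}I) \subseteq mI = 0$ to get $ma = 0$, then use $I + aR \in \mathcal{F}$ and $m(I+aR) \subseteq mI + maR = 0$ to conclude $m = 0$. The only difference is that you spell out the inclusion $(ma)(a^{-1}I) \subseteq mI$ element by element, which the paper states without elaboration.
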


\begin{proof}
Clearly $R \in F$. Suppose that $I$ is a right ideal such that there exists $a \in R$ with $I+aR, a^{-1}I \in \F$. To prove that $I \in \F$, suppose that $m \in M$ with $mI = 0$. Because $(ma) \cdot a^{-1}I \subseteq mI = 0$ and $a^{-1}I \in \F$, we have $ma = 0$. But then $m(I+aR) \subseteq mI + maR = 0$, and $I+aR \in \F$ implies $m = 0$. Thus $I \in \F$ as desired.
\end{proof}

For a right $R$-module $M$ and an element $s \in R$, we say that $s$ is a \emph{non-zero-divisor} for $M$ if $ms = 0$ implies $m = 0$ for all $m \in M$.

\begin{theorem}\label{E:nonzerodiv}
Let $S$ be a multiplicative subset of a ring $R$, and let $M$ be a nonzero $S$-Noetherian right $R$-module. If every element of $S$ is a non-zero-divisor for $M$, then $M$ has a point annihilator that is a completely prime right ideal.
\end{theorem}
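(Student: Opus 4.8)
The plan is to produce a point annihilator $P = \mathrm{ann}(x_*)$ that is $S$-maximal among all point annihilators lying above a fixed one, and then to check directly that the non-zero-divisor hypothesis forces such a $P$ to be completely prime. Conceptually this is the module-theoretic shadow of the one-sided Prime Ideal Principle attached to the Oka family of Lemma~\ref{E:Oka ann}: maximal members of the complement of that family are completely prime right ideals, and the point annihilators of $M$ are exactly its maximal members. The main obstacle is that $S$-Noetherianity of $M$ controls \emph{ascending} chains of submodules of $M$, whereas point annihilators are right ideals of $R$ that grow in the contravariant direction relative to the cyclic submodules they generate; so the first task is to convert the $S$-Noetherian hypothesis on $M$ into an $S$-maximality statement about point annihilators.

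First I would fix a nonzero $x_1 \in M$, set $P_0 = \mathrm{ann}(x_1)$, and use the canonical isomorphism $x_1 R \cong R/P_0$ of right $R$-modules. Under it, the submodules of $x_1 R$ correspond order-preservingly to the right ideals $J$ with $P_0 \subseteq J$, via $J \mapsto x_1 J$; moreover right multiplication by $s \in S$ is compatible on the two sides, so that $x_1 J \cdot s \subseteq x_1 P$ if and only if $Js \subseteq P$ (using $P_0 \subseteq P$). I would then apply Theorem~\ref{E:Noether}(iv) to the nonempty family
\[
	\Sigma = \{\, x_1\,\mathrm{ann}(y) : 0 \neq y \in M,\ \mathrm{ann}(y) \supseteq P_0 \,\}
\]
of submodules of $M$, obtaining an $S$-maximal element $N_* = x_1 P$ with $P = \mathrm{ann}(x_*)$ for some nonzero $x_*$ and $P \supseteq P_0$. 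Translating the $S$-maximality through the correspondence yields an $s \in S$ such that, for every nonzero $y$ with $\mathrm{ann}(y) \supseteq P$, one has $\mathrm{ann}(y)\, s \subseteq P$.

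It then remains to show $P$ is completely prime. Suppose $a,b \in R$ satisfy $aP \subseteq P$ and $ab \in P$, and assume $a \notin P$. Then $x_* a \neq 0$, and a direct computation ($ap \in P \Leftrightarrow (x_*a)p = 0$) shows that $aP \subseteq P$ is equivalent to $P \subseteq \mathrm{ann}(x_* a)$. Applying the $S$-maximality of $P$ to $y = x_* a$ gives $\mathrm{ann}(x_* a)\, s \subseteq P$; since $ab \in P$ forces $b \in \mathrm{ann}(x_* a)$, we obtain $bs \in P$, that is, $(x_* b)s = 0$. Here the hypothesis enters decisively: because $s$ is a non-zero-divisor for $M$, it follows that $x_* b = 0$, i.e.\ $b \in P$. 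Hence $P$ is completely prime, and since $x_* \neq 0$ it is a point annihilator of $M$.

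The step I expect to require the most care is the translation between the $S$-maximal submodule $N_*$ of $M$ and the corresponding $S$-maximality statement for the right ideal $P$: one must verify that $x_1 R \cong R/P_0$ is compatible with both inclusions and right multiplication by elements of $S$, and that restricting to point annihilators above $P_0$ costs nothing (any such $P$ produced is already a point annihilator, which is all that is claimed). Everything else is a short annihilator calculation, together with the single essential use of the non-zero-divisor hypothesis to pass from $bs \in P$ to $b \in P$.
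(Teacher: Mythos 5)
Your proof is correct, but it takes a genuinely different route from the paper's. The paper runs the Oka-family machinery: it takes the family $\mathcal{F}$ of Lemma~\ref{E:Oka ann}, uses Zorn's Lemma to produce a maximal element of the complement $\mathcal{F}'$ above the given point annihilator $J_0$ --- the chain step invokes Theorem~\ref{E:Noether}(ii) ($S$-stationarity of the chain of submodules $mJ_i \subseteq M$) together with the non-zero-divisor hypothesis to keep the union of a chain inside $\mathcal{F}'$ --- and then cites the one-sided Prime Ideal Principle \cite[Theorem 3.4]{R1} to conclude that such a maximal element is a completely prime point annihilator. You bypass Oka families, Zorn's Lemma, and the external citation entirely: you apply Theorem~\ref{E:Noether}(iv) to get an $S$-maximal point annihilator $P = \mathrm{ann}(x_*)$ (your order correspondence $J \mapsto x_1 J$ between right ideals containing $P_0$ and submodules of $x_1R$ does intertwine inclusions and right multiplication by $s$, exactly because $P_0 \subseteq P$ makes $x_1 Js \subseteq x_1 P$ equivalent to $Js \subseteq P$), and then you verify complete primeness by a direct computation: $aP \subseteq P$ gives $P \subseteq \mathrm{ann}(x_*a)$, $S$-maximality gives $\mathrm{ann}(x_*a)s \subseteq P$, and $ab \in P$ gives $bs \in P$, with the non-zero-divisor hypothesis entering only at the final step $x_*bs = 0 \Rightarrow x_*b = 0$. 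Your argument is more elementary and self-contained; it is the $S$-analogue of the classical ``maximal annihilator is prime'' argument, very much in the spirit of the paper's own commutative example showing $Spec(R)\cup\mathcal{F}_0$ is an $S$-Noetherian point annihilator set, whereas the paper's proof deliberately fits the theorem into the Oka/PIP framework that organizes the whole paper and yields an honestly maximal (not merely $S$-maximal) element of $\mathcal{F}'$. One quibble, confined to your motivational preamble: it is not true that the point annihilators of $M$ are \emph{exactly} the maximal members of $\mathcal{F}'$ (maximal members are point annihilators, but a point annihilator need not be maximal); your actual proof never uses this claim, so nothing is affected.
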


\begin{proof}
Let $\F$ denote the right Oka family of Lemma~\ref{E:Oka ann} for the module $M$. Fix a point annihilator $J_0 = Ann(m)$ for some nonzero $m$ in $M$, and note that $J_0 \in \mathcal{F}'$. We will show that $J_0$ is contained in a maximal element of $\mathcal{F}'$, which will be a point annihilator and completely prime right ideal, by \cite[Theorem 3.4]{R1}. 

Given a chain $\{J_i\}$ of right ideals in $\mathcal{F}'$ that all contain $J_0$, it suffices by Zorn's Lemma to show that $J = \bigcup J_i$ is in $\mathcal{F}'$. Note that the submodule $mR \subseteq M$ is isomorphic to $R/J_0$, so that each $K_i = J_i/J_0$ is isomorphic to the submodule $mJ_i$ of $M$. By Theorem \ref{E:Noether}, there is an $s\in S$ and an ideal $J_n$ such that every $(mJ_i)s$ is contained in $mJ_n$. This means that all $K_is$ are contained in $K_n$. It follows that all $J_is$ are in $J_n$, making $Js$ a subset of $J_n$.

	 Because $J_n \in \mathcal{F}'$, there exists $m_n \in M \setminus \{0\}$ with $m_n J_n = 0$. Then
	 $$m_n J s \subseteq m_n J_n = 0.$$
	 Because $s$ is a non-zero-divisor for $M$, we have $m_n J = 0$, and $J$ is in $\mathcal{F}'$ as desired.
	 Therefore $J$ is a completely prime right ideal as stated above, which is also a point annihilator of $M$.
\end{proof}
	 
For a commutative ring $R$, from the theorem above we obtain that every $S$-Noetherian $R$-module $M$ such that $S$ contains no zero-divisors for $M$ has an associated prime. This generalizes the well-known result every Noetherian $R$-module has an associated prime, and suggests that the $S$-Noetherian property with respect to a set $S$ of non-zero-divisors is a particularly strong finiteness property for a module. 

\bigskip
	
	\textbf{Acknowledgements.} We thank the anonymous referee for several helpful comments and suggestions. We also thank Omid Khani Nasab for valuable discussions, which led in particular to an improvement of Theorem~2.3.


\begin{thebibliography}{10}
	    	\bibitem{CR} Agrawal, S., et al. (n.d.) The CRing Project. Retrieved from \url{http://www.math.harvard.edu/~amathew/CRing.pdf}
	    	\bibitem{AS1} Ahmed, H., and Sana, H. (2015). $S$-Noetherian rings of the forms $\mathscr{A} [X]$ and $\mathscr{A}[[X]]$. \emph{Comm. Algebra} 43(9): 3848-3856.
	    	\bibitem{AS2} Ahmed, H., and Sana, H. (2016). Modules Satisfying the $S$-Noetherian property and $S$-ACCR. \emph{Comm. Algebra} 44(5): 1941-1951.
	    	\bibitem {A} Anderson, D. D., and T. Dumitrescu. (2002). $S$-Noetherian rings. \emph{Comm. Algebra} 30(9): 4407-4416.
	    	\bibitem{C} Cohen, I. S. (1950). Commutative rings with restricted minimum condition. \emph{Duke Math. J.} 17(1): 27-42.
	    	\bibitem {GW} Goodearl, K. R., and  Warfield Jr, R. B.  \emph{An Introduction to Noncommutative Noetherian Rings} (Vol. 61). Cambridge University Press, 2004.
	    	\bibitem{L} Lam, T. Y. \emph{A First Course in Noncommutative Rings} (Vol. 131). Springer Science \& Business Media, 2013.
	    \bibitem{LO}	Lim, J. W., and Oh, D. Y. (2015). S-Noetherian properties of composite ring extensions. \emph{Comm. Algebra} 43(7): 2820-2829.
	    	\bibitem {R1} Reyes, M. L. (2010). A one-sided prime ideal principle for noncommutative rings. \emph{J. Algebra Appl.} 9(6): 877-919.
	    	\bibitem {R2} Reyes, M. L. (2012). Noncommutative generalizations of theorems of Cohen and Kaplansky. \emph{Algebr. Represent. Theory} 15(5): 933-975.
		\bibitem{R3} Reyes, M.L. (2016). A prime ideal principle for two-sided ideals. \emph{Comm. Alg.} 44(11): 4585--4608.
	    	\bibitem {Z} Zhongkui, L. (2007). On $S$-Noetherian rings. \emph{Arch. Math.(Brno)} 43: 55-60.
	    	\end{thebibliography}
	    \end{document}